\theoremstyle{plain}
\newtheorem{thm}{Theorem}[section]
\newtheorem{lemma}[thm]{Lemma}
\theoremstyle{definition}
\def\mequal{\mathrel{\mathpalette\@mvereq{\hbox{\sevenrm m}}}} 
\def\@mvereq#1#2{\lower.5\p@\vbox{\baselineskip\z@skip\lineskip1.5\p@
    \ialign{$\m@th#1\hfil##\hfil$\crcr#2\crcr=\crcr}}}
\def\partr#1#2{/\kern-.08333em/_{#1,#2}^{\phantom{.}}}
\def\invpartr#1#2{/\kern-.08333em/_{#1,#2}^{-1}} 
\def\hpartr#1#2{/\kern-.08333em/_{#1,#2}^{h}}
\def\Epartr#1#2{/\kern-.08333em/_{#1,#2}^{E}}
\def\newdot{{\kern.8pt\cdot\kern.8pt}}
\def\,{\relax\ifmmode\mskip\thinmuskip\else\thinspace\fi}
\def\{{\relax\ifmmode\lbrace\else $\lbrace$\fi}
\def\}{\relax\ifmmode\rbrace\else $\rbrace$\fi}
\font\sevenrm=cmr7
\newcommand\TT{\mathbb{T}}
\newcommand\ZZ{\mathbb{Z}}
\def\mathpal#1{\mathop{\mathchoice{\text{\rm #1}}%
   {\text{\rm #1}}{\text{\rm #1}}%
   {\text{\rm #1}}}\nolimits}
\def\div{\mathpal{div}}
\def\T{{\mathbb T^d}}
\begin{document}

\title[]{A stochastic variational approach to the viscous Camassa-Holm and Leray-alpha equations}

\author[A.B. Cruzeiro and G. Liu]{Ana Bela Cruzeiro (1) and  Guoping Liu (2)} \address{(1), (2) GFMUL and Dep. de Matem\'atica Instituto Superior T\'ecnico (UL) \hfill\break\indent
  Av. Rovisco Pais\hfill\break\indent
  1049-001 Lisboa, Portugal \hfill\break\indent
  (2)  AMSS, CAS \hfill\break\indent
  Zhongguancun East Road, No.55\hfill\break\indent
  100190 Beijing, P.R.China
  }
\email{(1) abcruz@math.tecnico.ulisboa.pt}
\email{(2) liuguoping002@163.com}


%
%

\begin{abstract}\noindent
We derive the ($d$-dimensional) periodic   incompressible and  viscous Camassa-Holm equation as well as the Leray-alpha equations via  stochastic variational principles. We discuss the existence of solution 
for these equations in the space $H^1$ using the probabilistic characterisation.  The underlying Lagrangian flows are diffusion processes living in the group of diffeomorphisms of the torus. We study in detail these diffusions.

\end{abstract}

\maketitle
\tableofcontents

%
%

\section{Introduction}\label{Section1}
\setcounter{equation}0

In the context of fluid dynamics the one dimensional Camassa-Holm equation
$$
\frac{\partial}{\partial t}(u-u^{\prime \prime})=
-3 u u^{\prime} +2u^{\prime}u^{\prime \prime}+uu^{\prime \prime \prime}
\leqno{(1.1)}
$$
was introduced in \cite{Camassa-Holm:93} to
describe the motion of unidirectional shallow water waves. The Lagrangian approach  consists in looking at the integral flows associated to the velocities $u$, namely the curves $g (t)(x)$ satisfying

$$\frac{\partial}{\partial t} g(t)(x)=u(t, g(t)(x)),~~~g(0)(x)=x. \leqno(1.2)$$
The Lagrangian flows $g(t) (\cdot )$ corresponding to the Camassa-Holm equation are geodesics with respect to the right-invariant
induced $H^1$ metric on a group of   (Sobolev) homeomorphisms over the circle.
This was proved in  \cite{Misiolek:98} and \cite{Kouranbaeva:99} and it corresponds to Arnold's (\cite{Arnold:66}) characterization of the Euler equation, now replacing the $L^2$ by
the right invariant $H^1$ norm in the Lagrangian.

Geodesics are minima of length and there is, indeed, a variational principle associated to the equations.
The Camassa-Holm Lagrangian flows $g(t)$, with $t\in [0,T]$,  can be characterized as critical paths for the action
functional

$$S[g]=\frac{1}{2}E\int_0^T ||\dot g(t)\circ g^{-1} (t)||_{H^1}^2 dt \leqno(1.3)$$
where $\dot g$ denotes the derivative in time of $g$.

These geodesic equations are  a special case of Lagrangian systems treated in Geometric Mechanics
via variational principles in  general Lie groups (\cite{Marsden-Ratiu:02}).

 In \cite{Shkoller:98}, and developing infinite-dimensional geometric methods as in \cite{Ebin-Marsden:70}, the well-posedness of the problem in
the space $H^s$ with $s>\frac{3}{2}$ was proved. Also in \cite{Constantin-Escher:98} the existence of solutions in $H^3$ was shown.
Camassa-Holm equation is also studied in higher dimensions in many works: we refer here  to \cite{Holm-Marsden:04} and
\cite{Shkoller:98}. 

Considering this equation in the viscous case, namely
\begin{eqnarray*}
\frac{\partial}{\partial t}(u- u^{\prime \prime})-\nu (u- u^{\prime \prime})^{\prime \prime}=
-3uu^{\prime}+2u^{\prime}u^{\prime\prime}+uu^{\prime\prime\prime}
\end{eqnarray*}
and, in  higher dimensions,

\begin{eqnarray*}
&&\frac{\partial}{\partial t}(u-\Delta u)-\nu\Delta(u-\Delta u)\\
&=&-u\cdot \nabla (u-\Delta u)- \div  u(u-\Delta u)+\sum_j
\nabla u^j\cdot \Delta u^j-\frac{1}{2}\nabla |u|^2,
\end{eqnarray*}
we lose the geodesic characterization, as we are no longer dealing with a conservative system. Nevertheless we still have a variational model, in a stochastic framework. The aim of this article is to formulate a stochastic variational principle for the  viscous incompressible Camassa-Holm equation and prove an existence result  of the critical (Lagrangian) stochastic process. The Lagrangian process provides a solution for this equation:
in our stochastic variational framework we replace deterministic Lagrangian paths  by  semimartingales and consider the classical Lagrangian evaluated on the drift of those semimartingales, this drift playing the role of  their (mean) time derivative. The critical paths
for the action are  diffusions whose drift satisfies Camassa-Holm equation. 

The viscous Camassa-Holm equations are also known as the Navier-Stokes-alpha  equations. In our work we consider $\alpha$ equal to one, for simplicity of notation
(this does not have any implication in the results). A very similar model, known as  the Leray-alpha equations, was introduced in \cite{Cheskidov-etal:05}. We study also  these
equations (again, with $\alpha =1$) from the variational point of view. They are obtained when considering the same action functional, but a different class of admissible variations.

The stochastic variational principle was derived in the case of the $L^2$ metric and for the Navier-Stokes equations, in the two-dimensional torus,
in \cite{Cipriano-Cruzeiro:07} and later generalized
to compact Riemannian manifolds in \cite{Arnaudon-Cruzeiro:12}.  
The same kind of  variational principles can be formulated on general
Lie groups: this is the content of reference \cite{Arnaudon-Chen-Cruzeiro:12} (c.f. also \cite{Chen-Cruzeiro-Ratiu:15}).

Viscous Camassa-Holm equations were introduced in \cite{Chen-etal:98}, \cite{Chen-etal:99} and \cite{Foias-etal:02} as a model of fluid  turbulence 
(c.f. also \cite{Foias-etal:01}). Several works are devoted to the existence of solutions for this equation in Sobolev spaces and under different boundary conditions via partial differential equation
methods. Just to cite some, we mention  \cite{Tan-Shen-Ding:07}, \cite{Lim:07} or \cite{BS:2008}.

Our study is probabilistic. It may be regarded from the perspective of stochastic control theory or stochastic geometric mechanics.

We study the  incompressible viscous Camassa-Holm equation with periodic boundary conditions in the space variable (i.e. in the $d$-dimensional torus
$\mathbb T^d $). In the next section we construct some Brownian motions living in the homeomorphisms group and show the existence of stochastic flows which are perturbations of  such Brownian motions by a time dependent drift $u\in L^2 ([0,T]; H^1 (\mathbb T^d ))$. In section 3 we derive a stochastic variational principle for the $H^1$ metric and in the following section we prove the existence of a weak solution for the  periodic incompressible viscous Camassa-Holm equation  under some restrictive assumption. For the Leray-alpha model and using a different class of admissible variations this assumption can be removed: this is done in the last paragraph.

\section{Stochastic processes on the homeomorphisms group of the torus}\label{Section2}
\setcounter{equation}0

2.1. The group of homeomorphisms of the torus.

\vskip 2mm
 
Let $\mathbb T^d$ be the $d$-dimensional flat torus. We denote by $G^s, s\geq 0$ the (infinite-dimensional) group of maps $g: \mathbb T^d \rightarrow 
\mathbb T^d$ such that  $g$ and $g^{-1}$ belong to the Sobolev space $H^s$ and such that $g$ keeps the volume measure  $d\theta$ 
invariant, namely $(g)_* (d\theta )=d\theta$. When $s>\frac{d}{2}+1$   Sobolev imbedding theorems imply that  the maps of $G^s$ 
are diffeomorphisms.
Also $G^s $ is a topological group for the composition of maps (not quite a Lie group because
left  composition  is not a smooth operation) and it is a smooth manifold (c.f. \cite{Ebin-Marsden:70}).

Let us denote by $e$ the identity of the group, $e (\theta )=\theta $. The tangent space at the identity  consists 
of  vector fields on $M$ which are $H^s $ regular and have zero divergence. This space is identified 
with the Lie algebra of the group.
On $G^s \equiv G^s (\T )$ we consider the $H^1$ metric: for $X,Y \in T_e G^s$,

$$<X ,Y >_{H^1} =\int_{\T } (X (\theta ).Y (\theta ))d\theta 
+\int_{\T } (\nabla X (\theta ). \nabla Y(\theta ))  d\theta.$$

One
extends this metric by right invariance to $G^s$, namely
if $X,Y \in T_g (G^s )$, the space of $H^s$ vector fields over $g$,

$$(<X,Y>_{H^1})_{g}=<X\circ g^{-1}, Y\circ g^{-1}>_{H^1}.$$

Note  that this metric  does not necessarily coincide with the one that defines the topology (this is an example of 
a \it weak Riemannian structure\rm). We refer to   \cite{Ebin-Marsden:70}   for a detailed study of the geometry of 
diffeomorphism groups.

 We want to construct a basis for the Lie algebra of the group $G^s $ endowed with the $H^1$ metric. Let ${\tilde \ZZ}^d$ be a subset
of $\mathbb Z^d$ 
where we identify  $k,l \in \mathbb Z^d$ through the equivalence relation  $k\simeq l$ iff $k+l =0$.

Consider, for each $k\neq 0$,
 an orthonormal basis $\{ \epsilon_k^1 ,~..., \epsilon_k^{d-1} \}$ of the space
$E_k =\{ x\in \mathbb R^d : k.x=0 \}$, where $k.x$ denotes scalar product. 
For example, when $d=2$ we take $\epsilon_k =\frac{1}{|k|}(-k_2 ,k_1 )$ when $k=(k_1 ,k_2 )$. For $d>2$ there is no canonical choice of basis. 
Write $\epsilon_{-k}=-\epsilon_k$.

Considering the usual identification of vector fields and functions, $u\rightarrow u(\theta ).\nabla$, a
 basis of the Lie algebra of $H^s$ divergence free vector fields on the $d$-dimensional torus can be defined as

$$\{ \lambda_k (s) (\epsilon_k^\alpha )\cos (k.\theta ), \lambda_k (s) (\epsilon_k^\alpha )\sin (k.\theta )\}_{k\in \tilde \ZZ^d , \alpha =1,..., d-1}$$
for some normalizing factors $\lambda_k (s)$, together with the canonical vector fields of $\mathbb R^d$.

In this work we typically denote by $k,l$ indeces in $\tilde \ZZ^d $, by $\alpha ,\beta =1, ..., d-1$ indices for the basis of of $E_k$ and $i,j =1, ..., d$
the $d$-dimensional  components of the torus.

\vskip 5mm

2.2. Brownian motions on $G^0$.

\vskip 2mm

Let
$\{x_{k}^{\alpha,1}(t,\omega ),
x_{k}^{\alpha,2}(t,\omega )$ with  
$k \in {\tilde \ZZ}^d , k\neq 0, \alpha =1,~..., d-1,  t\geq 0$,
  be a sequence of real valued independent standard Brownian motions defined 
on a given  filtered probability space $(\Omega,\mathcal{F},P)$, with increasing filtration  $\mathcal{F}_t$,   and let $y(t,\omega )$ be a $\TT^d$-valued Brownian motion with components which are also independent from the previous real-valued Brownian motions. We shall drop the probability space
 parameter in the notations.

 Define
$\alpha_k^2=(|k|^2+1)^{\frac{r}{2}}$ with $r\geq d+1$. Then
$$
x(t)(\theta)=\sum_{k\neq 0}\sum_{ \alpha} 
\frac{1}{\alpha_k}(\epsilon_k ^\alpha) [x_{k}^{\alpha,1}(t)\cos(k.\theta)+
x_{k}^{\alpha,2}(t)\sin(k.\theta)]+y(t)
\leqno{(2.1)}
$$
converges uniformly on $[0,T]\times \TT^d$.

The  canonical horizontal diffusion  corresponding to the Lie algebra valued process $x(t)$  is the solution of the  following Stratonovich stochastic differential equation 
with respect to the filtration $\mathcal{F}_t$, $t\in [0,T]$,
$$dg(t)=(\circ dx(t))(g(t)),\qquad
g(0)=e. \leqno{(2.2)}
$$
More explicitly, for $i=1,~..., d$,

$$
dg_i (t) (\theta )=\sum_{k\neq 0}\sum_{ \alpha} 
\frac{1}{\alpha_k}(\epsilon_k^\alpha )_i [\cos(k.g(t)(\theta ))\circ dx_{k}^{\alpha,1}(t)+\sin(k.g(t)(\theta ))\circ dx_{k}^{\alpha,2}(t)]+  dy_i (t).
$$
$g_i (0)(\theta )=\theta_i$

\begin{lemma}
Equation (2.2) can be written in the It\^o form as follows:
$$dg_i (t) (\theta )=\sum_{k\neq 0}\sum_{\alpha} 
\frac{1}{\alpha_k}(\epsilon_k^\alpha )_i [\cos(k.g(t)(\theta))dx_{k}^{\alpha,1}(t)+\sin(k.g(t)(\theta )) dx_{k}^{\alpha,2}(t)]+ dy_i (t)\leqno{(2.3)}$$
$g_i (0)(\theta )=\theta_i$.
\end{lemma}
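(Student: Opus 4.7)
The plan is to perform the standard Stratonovich-to-It\^o conversion and verify that the correction term vanishes, leaving the equation in the same form but with ordinary It\^o differentials. Since the $y(t)$ component is an additive Brownian motion independent of $g(t)$, it contributes no correction, so I only need to deal with the infinite sum of Stratonovich integrals against the $x_k^{\alpha,j}$.

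For each fixed $k,\alpha$, denote the coefficient vector fields by
\begin{equation*}
V_k^{\alpha,1}(\theta)=\tfrac{1}{\alpha_k}\epsilon_k^\alpha\cos(k\cdot\theta),\qquad
V_k^{\alpha,2}(\theta)=\tfrac{1}{\alpha_k}\epsilon_k^\alpha\sin(k\cdot\theta).
\end{equation*}
The It\^o--Stratonovich correction for a term $(V_k^{\alpha,j})_i(g(t))\circ dx_k^{\alpha,j}(t)$ equals $\tfrac{1}{2}\sum_l \partial_l(V_k^{\alpha,j})_i(g(t))\,d\langle g_l(t),x_k^{\alpha,j}\rangle_t$. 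From the Stratonovich equation one reads that $d\langle g_l(t),x_k^{\alpha,1}\rangle_t=\tfrac{1}{\alpha_k}(\epsilon_k^\alpha)_l\cos(k\cdot g(t)(\theta))\,dt$ and analogously with sine for the $j=2$ integrator.

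Plugging in $\partial_l(V_k^{\alpha,1})_i(\theta)=-\tfrac{k_l}{\alpha_k}(\epsilon_k^\alpha)_i\sin(k\cdot\theta)$ and summing over $l$, the correction from the $j=1$ term becomes
\begin{equation*}
-\tfrac{1}{2\alpha_k^2}(\epsilon_k^\alpha)_i\,(k\cdot\epsilon_k^\alpha)\sin(k\cdot g(t)(\theta))\cos(k\cdot g(t)(\theta)),
\end{equation*}
and a symmetric computation for $j=2$ gives the same expression up to sign. The crucial observation is that $\epsilon_k^\alpha\in E_k=\{x:k\cdot x=0\}$ by construction, so $k\cdot\epsilon_k^\alpha=0$ and each correction term vanishes identically. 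This is precisely the stochastic counterpart of the divergence-free property of the basis vector fields.

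The remaining step is purely analytic: to justify exchanging the Stratonovich-It\^o conversion with the infinite summation over $k,\alpha$. Since $\alpha_k^2=(|k|^2+1)^{r/2}$ with $r\geq d+1$, the series defining $x(t)$ and the associated covariation series converge uniformly in $\theta$ and in $L^2(\Omega)$, and the pointwise-in-$(k,\alpha)$ vanishing of the correction immediately gives that the total correction is zero. I expect the main (minor) obstacle to be checking this uniform convergence carefully enough to legitimately move the $\circ$ past the sum; the algebraic cancellation itself is immediate from the orthogonality $k\cdot\epsilon_k^\alpha=0$.
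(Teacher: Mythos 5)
Your proposal is correct and follows essentially the same route as the paper: compute the It\^o--Stratonovich correction for each mode $(k,\alpha)$ and observe that it vanishes. The only cosmetic difference is that you kill each correction term individually via the orthogonality $k\cdot\epsilon_k^\alpha=0$, whereas the paper displays the cosine and sine contributions, $\mp\sum_i\frac{1}{\alpha_k}k_i(\epsilon_k^\alpha)_i\sin(k\cdot g)\cos(k\cdot g)\,dt$, and lets them cancel in pairs --- both mechanisms are visible in the same formulas, so this is the same proof.
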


\begin{proof}
We have,
\begin{eqnarray*}
&&d\cos(k.g(t)).dx_{k}^{\alpha,1}(t)\\
&=&-\sum_i k_i \sin(k.g(t))dg_i (t).dx_{k}^{\alpha,1}(t)\\
&=&-\sum_{i,\beta ,m \neq 0}  k_i \sin(k.g(t))
\frac{1}{\alpha_m}(\epsilon_m^\beta )_i [\cos(m.g(t))\circ dx_{m}^{\beta,1}(t)+\sin(m.g(t))\circ dx_{m}^{\beta,2}(t)].dx_{k}^{\alpha,1}(t)\\
&=&-\sum_{i} \frac{1}{\alpha_k}k_i (\epsilon_k^\alpha )_i \sin(k.g(t))\cos(k.g(t))dt
\end{eqnarray*}
and
\begin{eqnarray*}
&&d\sin(k.g(t)).dx_{k}^{\alpha,2}(t)\\
&=&\sum_i k_i \cos(k.g(t))dg_i (t).dx_{k}^{\alpha,2}(t)\\
&=&\sum_{i, \beta ,m\neq 0 }k_i \cos(k.g(t))
\frac{1}{\alpha_m}(\epsilon_m^\beta )_i [\cos(m.g(t))\circ dx_{m}^{\beta,1}(t)+\sin(m.g(t))\circ dx_{m}^{\beta,2}(t)].dx_{k}^{\alpha,2}(t)\\
&=&\sum_{i} \frac{1}{\alpha_k}k_i (\epsilon_k^\alpha )_i  \sin(k.g(t))\cos(k.g(t))dt
\end{eqnarray*}
The It\^o stochastic contraction is therefore equal to zero
and the conclusion follows.
\end{proof}
We prove the existence of the process $g(t)$ in the case where $r =d+3$.
For this we shall need the following lemma.
\begin{lemma}
Define 
$$V(\theta)=\sum_{k\neq 0}\frac{\sin^2(k\cdot \theta)}{|k|^{d+3}}.$$
Then there exists a constant $C_1>0$ such that for all $0<|\theta|<\frac{\pi}{4}$,
$$V(\theta)\leq C_1|\theta|^2\log\frac{1}{|\theta|}.$$
\end{lemma}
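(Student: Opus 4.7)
The plan is to split the series at the natural scale $|k| \sim 1/|\theta|$ and bound the two resulting pieces separately using the two elementary estimates
\begin{equation*}
\sin^{2}(k\cdot\theta) \;\leq\; |k\cdot\theta|^{2} \;\leq\; |k|^{2}|\theta|^{2}, \qquad \sin^{2}(k\cdot\theta)\;\leq\; 1.
\end{equation*}
Set $N := \lfloor 1/|\theta|\rfloor$ and write $V(\theta) = S_{1}(\theta) + S_{2}(\theta)$, where $S_{1}$ is the contribution of $0<|k|\le N$ and $S_{2}$ that of $|k|>N$. On $S_{1}$ I apply the quadratic bound, and on $S_{2}$ the bound by $1$.

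For the low-frequency part,
\begin{equation*}
S_{1}(\theta) \;\leq\; |\theta|^{2}\sum_{0<|k|\le N}\frac{1}{|k|^{d+1}},
\end{equation*}
and the lattice sum is controlled by grouping by shells: the number of $k\in\mathbb Z^{d}$ with $|k|\in[m,m+1)$ is $O(m^{d-1})$, so the sum is bounded by $C\sum_{m\ge 1} m^{-2}$, which converges independently of $N$. Hence $S_{1}(\theta) \leq C|\theta|^{2}$. For the high-frequency part, the same shell comparison gives
\begin{equation*}
S_{2}(\theta)\;\leq\; \sum_{|k|>N}\frac{1}{|k|^{d+3}} \;\leq\; C\sum_{m>N}\frac{1}{m^{4}} \;\leq\; \frac{C}{N^{3}}\;\leq\; C|\theta|^{3}.
\end{equation*}

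Combining the two estimates yields $V(\theta)\leq C|\theta|^{2} + C|\theta|^{3}\leq C'|\theta|^{2}$ for $|\theta|<\pi/4$. Since $\log(1/|\theta|) \geq \log(4/\pi) > 0$ on this range, the bound $V(\theta)\leq C_{1}|\theta|^{2}\log(1/|\theta|)$ follows by adjusting the constant. There is no real obstacle here; the only thing to be careful about is the shell estimate for the lattice sum, which is the standard device replacing the continuous bound $\int_{1}^{\infty} r^{d-1}/r^{d+1}\,dr<\infty$. In fact the argument shows the stronger bound $V(\theta)=O(|\theta|^{2})$, but the logarithmic factor in the statement is a convenient (and weaker) formulation that is all that is needed in the sequel.
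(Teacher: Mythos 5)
Your proof is correct, and it takes a genuinely different route from the paper's. The paper reduces the $d$-dimensional sum to a one-dimensional one: it first bounds $\sin^2(k\cdot\theta)\le C_d\sum_i\sin^2(k_i\theta_i)$, then uses an AM--GM inequality to split $|k|^{d+3}\ge Ck_i^3\,(k_1^2+\cdots+\breve k_i^2+\cdots+k_d^2)^{d/2}$ so that the lattice sum factors into a convergent $(d-1)$-dimensional piece times the one-dimensional sum $\sum_{k_i}\sin^2(k_i\theta_i)/k_i^3$; the latter is controlled by Lemma~2.1 of Fang's paper (which is where the factor $|\theta_i|^2\log(1/|\theta_i|)$ originates), and the coordinates are recombined using concavity of $\xi\mapsto\xi\log(1/\xi)$. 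You instead split directly at the scale $|k|\sim 1/|\theta|$, use $\sin^2(k\cdot\theta)\le|k|^2|\theta|^2$ below the threshold and $\sin^2\le 1$ above it, and control both lattice sums by the standard shell count $\#\{k:|k|\in[m,m+1)\}=O(m^{d-1})$; since $\sum_m m^{d-1-(d+1)}$ converges and the tail $\sum_{m>N}m^{-4}=O(N^{-3})$, you get the sharper conclusion $V(\theta)=O(|\theta|^2)$, from which the stated bound follows because $\log(1/|\theta|)$ is bounded below on $|\theta|<\pi/4$. Your argument is self-contained and shows the logarithm is an artifact of the paper's reduction: it is genuinely present for the exponent $d+2$ (the direct analogue of Fang's circle case, where $\sum_{0<|k|\le N}|k|^{2-(d+2)}\sim\log N$), but not for $d+3$. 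What the paper's formulation buys is only that the $\xi\log(1/\xi)$ form plugs directly into the Osgood-type lemmas quoted from Fang in the proof of Theorem~2.3; your stronger estimate implies the stated one and would serve there equally well (indeed it would let one replace the Osgood argument by plain Gronwall).
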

\begin{proof}

We have,
$$V(\theta)=\sum_{k\neq 0}\frac{\sin^2(k\cdot \theta)}{|k|^{d+3}}
\leq C_d\sum_{k\neq 0}\sum_{i=1}^d\frac{\sin^2(k_i\cdot \theta_i)}{|k|^{d+3}}$$

Using the following  inequality of arithmetic and geometric means
$$\frac{\frac{k_i^2}{3}+\frac{k_i^2}{3}+\frac{k_i^2}{3}+(k_1^2+\cdots+\breve{k}_i^2+\cdots+k_d^2)}{4}\geq \sqrt[4]{(\frac{k_i^2}{3})^3(k_1^2+\cdots+\breve{k}_i^2+\cdots+k_d^2)},$$
i.e. $|k|^4\geq Ck_i^3(k_1^2+\cdots+\breve{k}_i^2+\cdots+k_d^2)^{\frac{1}{2}},$ we obtain $|k|^{d+3}=|k|^4|k|^{d-1}\geq Ck_i^3(k_1^2+\cdots+\breve{k}_i^2+\cdots+k_d^2)^{\frac{d}{2}},$ where the symbol $\breve{\cdot }$ represents omission of the corresponding term.

Since $\sum\limits_{k\neq 0}\frac{1}{(k_1^2+\cdots+\breve{k}_i^2+\cdots+k_d^2)^{\frac{d}{2}}}$ converges, we have
\begin{eqnarray*}
V(\theta)&\leq&C_d\sum_{i=1}^d\sum_{k\neq 0}\frac{\sin^2(k_i\cdot \theta_i)}{|k|^{d+3}}\\
&\leq&C_d\sum_{i=1}^{d}(\sum_{k\neq 0}\frac{\sin^2(k_i\cdot \theta_i)}{k_i^3})(\sum_{k\neq 0} \frac{1}{(k_1^2+\cdots+\breve{k}_i^2+\cdots+k_d^2)^{\frac{d}{2}}})\\
&\leq&\widetilde{C}_d\sum_{i=1}^{d}\sum_{k\neq 0}\frac{\sin^2(k_i\cdot \theta_i)}{k_i^3}\\
&\leq&C\sum_{i=1}^{d}|\theta_i|^2\log\frac{1}{|\theta_i|}
\end{eqnarray*}
where $\widetilde{C}_d=C_d\sum\limits_{k\neq 0} \frac{1}{(k_1^2+\cdots+\breve{k}_i^2+\cdots+k_d^2)^{\frac{d}{2}}}$,
and the last inequality comes from lemma 2.1 in \cite{Fang:02}.

Noticing the fact that the function $\xi \rightarrow \xi\log\frac{1}{\xi}$ is concave over $]0,1[$,
\begin{eqnarray*}
2\sum_{i=1}^{d}|\theta_i|^2\log\frac{1}{|\theta_i|}&=&\sum_{i=1}^{d}|\theta_i|^2\log\frac{1}{|\theta_i|^2}\\
&\leq&(\sum_{i=1}^{d}|\theta_i|^2)\log\frac{1}{(\sum_{i=1}^{d}|\theta_i|^2)}\\
&=&|\theta|^2\log\frac{1}{|\theta|^2}\\
&=&2|\theta|^2\log\frac{1}{|\theta|}\\
\end{eqnarray*}

Finally, we get the result $V(\theta)\leq C_1|\theta|^2\log\frac{1}{|\theta|}.$
\end{proof}
\vskip 3mm
\bf\noindent Theorem 2.3. \it
For $r =d+3$ the solution $g(t)$ of the stochastic differential equation $(2.2)$ exists and is a continuous process with values in the space 
of the homeomorphism group $G^0$.\rm

\begin{proof}
The proof follows essentially  the arguments in  \cite{Fang:02}. 
For each $\theta\in \TT^d$  consider $g^n(t)(\theta)$  the solution of the following s.d.e.:
$$
d\gamma^n(t)=\sum_{|k|\leq 2^n ,k\neq 0}\frac{1}{\alpha_k}
\sum_{\alpha}(\epsilon_k^\alpha )[\cos(k.\gamma^n(t))dx_{k}^{\alpha,1}(t)+
\sin(k.\gamma^n(t))dx_{k}^{\alpha,2}(t)]+dy(t)
\leqno{(2.4)}
$$
$$\gamma^n(0)=\theta.$$
Denote $\eta_i (t)=\frac{\gamma_i^n(t)-\gamma_i^{n+1}(t)}{4}, i=1,...,d$; then the It\^o's stochastic contraction is 
\begin{eqnarray*}
d\eta_i (t)\cdot d\eta_i (t)&=&(\frac{1}{4})^2\{\sum_{|k|=1}^{2^n}
\frac{1}{\alpha_k^2}\sum_{\alpha} [(\epsilon_k^\alpha )_i]^2
[(\cos k.\gamma^{n+1}(t)-\cos k.\gamma^n(t))^2\\
&&+(\sin k.\gamma^{n+1}(t)-\sin k.\gamma^n(t))^2]\\
&&+\sum_{|k|=2^n+1}^{2^{n+1}}\frac{1}{\alpha_k^2}\sum_{\alpha} 
[(\epsilon_k^\alpha )_i]^2 [\cos^2(k.\gamma^{n+1}(t))+\sin^2(k.\gamma^{n+1}(t))]\}\\
&\leq&(\frac{1}{4})^2 [(d-1)\sum_{|k|=1}^{2^n}
\frac{1}{\alpha_k^2}4 \sin^2(k\frac{\gamma^{n+1}(t)-\gamma^n(t)}{2})+
(d-1)\sum_{|k|=2^n+1}^{2^{n+1}}\frac{1}{\alpha_k^2}].
\end{eqnarray*}
Since $ [(\epsilon_k^\alpha )_i]^2\leq |\epsilon_k^\alpha|^2 \leq 1 $, the last inequality holds.
We have $\alpha_k^2=(|k|^2+1)^{\frac{d+3}{2}}\geq |k|^{d+3}.$   From lemma 2.2 we obtain
$$\sum_{|k|=1}^{2^n}\frac{1}{\alpha_k^2}4\sin^2(k.\frac{\gamma^{n+1}(t)-\gamma^n(t)}{2})\leq C_1|\eta(t)|^2\log\frac{1}{|\eta(t)|}$$ 
and
$$\sum_{|k|=2^n+1}^{2^{n+1}}\frac{1}{\alpha_k^2}\leq \sum_{|k|=2^n+1}^{2^{n+1}}\frac{1}{|k|^{d+3}}\leq C_22^{-n}.$$
where $C_2=\sum\limits_{k\neq 0} \frac{1}{|k|^{d+2}}$.

Using It\^o's formula, for $p\geq 1$, we obtain 
$$
d\eta_i^{2p}(t)=2p\eta_i^{2p-1}(t)\cdot d\eta_i(t)+p(2p-1)\eta_i^{2p-2}(t)d\eta_i(t)\cdot d\eta_i(t).
$$
It follows that
\begin{eqnarray*}
E^{\mathcal{F}_t}(\eta_i^{2p}(t+\varepsilon)-\eta_i^{2p}(t))\leq C_1p\int_t^{t+\varepsilon}E^{\mathcal{F}_t}(|\eta(s)|^{2p}\log\frac{1}{|\eta(s)|^{2p}}ds)+K_p2^{-n}\varepsilon,
\end{eqnarray*}
where $K_p=C_2p(2p-1)(\frac{\pi}{4})^{2p-2}$.

Denote $\varphi(t)=E(|\eta(t)|^2)$; we have 
\begin{eqnarray*}
\varphi^\prime(t)=\sum_i\frac{d}{dt}E(\eta_i^2(t))&\leq& C_1E(|\eta(t)|^{2}\log\frac{1}{|\eta(t)|^{2}})+C_22^{-n}\\
&\leq&C(\varphi(t)\log\frac{1}{\varphi(t)}+2^{-n}),
\end{eqnarray*}
where the last inequality comes from the fact that the function $\xi \rightarrow \xi\log\frac{1}{\xi}$ is concave over $]0,1[$. We can apply  Lemma 2.3 in \cite{Fang:02} to get 
$$\psi^{\prime}(t)\leq C\psi(t)\log\frac{1}{\psi(t)}, $$
where $\psi(t)=\varphi(t)+2^{-n}$. Now lemma 2.2 in \cite{Fang:02} gives 
$$\varphi(t)\leq \psi(t)\leq 2^{-n\delta(t)}$$
with $\delta(t)=e^{-Ct}.$   

Hence there exist a  constant $C>0$ such that,
 $$E(|g^n(t)(\theta)-g^{n+1}(t)(\theta)|^{2})\leq C2^{-n\delta(t)}.$$ 
By the martingale maximal inequality,
$$E(\sup\limits_{0\leq t\leq T}|g^n(t)(\theta)-g^{n+1}(t)(\theta)|^{2})\leq C2^{-n\delta(T)}.$$   
Using Borel-Cantelli we deduce that\\ 
$$g(t)(\theta)=\lim\limits_n g^n(t)(\theta)$$ exists uniformly in $t\in [0,T]$.

Following \cite{Fang:02}, one can show that $g(t)$ satisfies Eq.$(2.2)$ and that $g(t)$ is the unique solution of this equation.\\    
    
We consider now  the regularity of $g(t)$. Using the same computation as before, we have the following estimate  
$$E(\sup\limits_{0\leq t\leq T}|g^n(t)(\theta)-g^n(t)(\theta^{\prime})|^{2})\leq C|\theta-\theta^{\prime}|^{2\delta(T)}.$$
From the triangle inequality, we obtain
$$E(|g^n(t)(\theta)-g^{n+1}(t)(\theta^\prime)|^{2})\leq C[|\theta-\theta^\prime|^{2\delta(t)}+2^{-n\delta(t)}].$$ 
Then we apply the Kolmogorov modification theorem, so that almost surely $g_t^n\rightarrow g_t$ uniformly over $\T $. Since $\delta(t)\rightarrow 1$ as $t\rightarrow 0$, for any $0<\delta<1$ , there exists $t_0>0$ such that 
$$
|g(t)(\theta)-g(t)(\theta^{\prime})|\leq C_{\delta,t}|\theta-\theta^{\prime}|^\delta,~~~t\leq t_0.
\leqno{(2.5)}
$$
Denote $g_{t,x}^n$ the solution of  s.d.e. (2.4). Then for $s\leq t_0$, we have (see \cite{Ikeda-Watanabe:81} )
$$g^n_{t_0+s,x}(\theta)=g^n_{s,x^{t_0}}(g^n_{t_0,x}(\theta)),$$
where $t\rightarrow x^{t_0}(t)=x(t+t_0)-x(t_0)$ is again a Brownian motion.
For $t>t_0$, using the flow property and letting $n\rightarrow +\infty$
$$g_{t_0+s,x}(\theta)=g_{s,x^{t_0}}(g_{t_0,x}(\theta));$$
 together with (2.5) and writing $m=[\frac{t}{t_0}]$  the integral part of $\frac{t}{t_0}$, we obtain 
$$
|g_{t,x}(\theta)-g_{t,x}(\theta^{\prime})|\leq C_{\delta,t}|\theta-\theta^{\prime}|^{\delta^{m+1}}.
\leqno{(2.6)}
$$
We have $m+1\leq \frac{t}{t_0}+1\leq 2\frac{t}{t_0}$. Denote $c_0=\frac{2}{t_0}\log\frac{1}{\delta}$, then we deduce 
$$|g_{t,x}(\theta)-g_{t,x}(\theta^{\prime})|\leq C_t|\theta-\theta^{\prime}|^{e^{-c_0t}}$$
from (2.6).
Using this inequality, we can show that $g(t)(\cdot)$ is $\alpha$-H\"older continuous $(0<\alpha<e^{-c_0t})$.

Following \cite{Fang:02} and \cite{Malliavin:99}, the flow property can be used to prove that the stochastic process $g(t)$ lives in the space of homeomorphisms on $\TT^d$.
\end{proof}
\vskip 3mm
 
 Let us describe the infinitesimal generator of the stochastic process $g(t):\TT^d \rightarrow \TT^d$ defined on very special functionals, namely
 functionals defined $\theta$-pointwise.
 
 \vskip 2mm
\bf Definition. \rm
Let $f$ be a $C^2$  function   defined on $\TT^d$. On a functional $F(g)(\theta)=f(g(\theta)), \theta\in \TT^d$ ,
the infinitesimal generator of the process $g(t):\TT^d \rightarrow \TT^d$ is defined by 
\begin{eqnarray*}
L(F)(\theta)=\lim_{t\rightarrow 0} \frac{1}{t}E((g(t))^{\ast}f(\theta)-f(\theta)),
\end{eqnarray*}
where $(g(t))^{\ast}f(\theta)=f(g(t)(\theta))$.
\rm

\vskip 3mm
\bf\noindent Theorem 2.4. \it
Let $\mathcal{L}$ be the infinitesimal generator of the stochastic process $g(t)$ defined in (2.2). Then there exist strictly positive constants $c_1, ..., c_d$ such that, for $F(g)(\theta)=f(g(\theta))$,
$$\mathcal{L}(F (g))(\theta )=\sum_{i=1}^d c_i \partial_{i i}^2 f (g(\theta )), ~~~~~~~f\in C^2(\TT^d ).$$
If $d=2$ we have $c_1 =c_2 =c$ and the infinitesimal generator reduces to the Laplacian operator multiplied by $c$.

\begin{proof}
Let us denote $A_k^\alpha =(\epsilon_k^\alpha ) \cos(k.\theta )$ and
$B_k^\alpha =(\epsilon_k^\alpha ) \sin(k.\theta )$.
Using It\^o's formula we have,
\begin{eqnarray*}
df(g(t))&=&\sum_{k\neq 0}\sum_{\alpha}\frac{1}{\alpha_k} 
[(A_k^\alpha f) (g(t)) \circ dx_k^{\alpha,1} (t)+
(B_k^\alpha f) (g(t))\circ dx_{k}^{\alpha, 2} (t)]\\
&&+\sum_i \partial_i f(g(t)) \circ dy^i (t)\\
&=&\sum_{k\neq 0}\sum_{\alpha}\frac{1}{\alpha_k} 
[(A_k^\alpha f) (g(t))  dx_k^{\alpha,1} (t)+
(B_k^\alpha f) (g(t)) dx_{k}^{\alpha, 2} (t)]\\
&&+\sum_i \partial_i f(g(t))  dy^i (t)\\
&&+\frac{1}{2}\sum_{k\neq 0}\sum_{\alpha}\frac{1}{\alpha_k^2} 
[(A_k^\alpha(A_k^\alpha f)) (g(t))+(B_k^\alpha(B_k^\alpha f)) (g(t)) ]dt\\
&&+\frac{1}{2}\sum_i \partial_{ii}^2f(g(t))dt
\end{eqnarray*}
We compute the  It\^o stochastic contractions. We have

$$A_k^\alpha (A_k^\alpha )f (\theta )=\sum_{i,j} (A_k^\alpha )_i \partial_i (A_k^\alpha )_j \partial_j f (\theta )+\sum_{i,j}(A_k^\alpha )_i (\theta ) (A_k^\alpha )_j (\theta )\partial_{ij}^2 f(\theta )$$
and
$$B_k^\alpha (B_k^\alpha )f (\theta )=\sum_{i,j} (B_k^\alpha )_i \partial_i (B_k^\alpha )_j \partial_j f (\theta )+\sum_{i,j}(B_k^\alpha )_i (\theta ) (B_k^\alpha )_j (\theta )\partial_{ij}^2 f(\theta )$$

Let us consider the terms with  second order derivatives of $f$. When $i\neq j$ they are

$$\frac{1}{2}\sum_{k\neq 0,\alpha} \frac{1}{\alpha_k^2}\sum_{i\neq j} ((\epsilon_k^\alpha )_i )((\epsilon_k^\alpha )_j ) [\cos^2 (k.g(t)(\theta) )+\sin^2 (k.g(t)(\theta) )]\partial_{ij}^2f (g(t)(\theta) ) dt$$
$$= \frac{1}{2}\sum_{k\neq 0}\sum_{\alpha} \frac{1}{\alpha_k^2}\sum_{i\neq j} ((\epsilon_k^\alpha )_i )((\epsilon_k^\alpha )_j )\partial_{ij}^2f (g(t)(\theta) ) dt.$$
As we sum in all k (and -k), this term is zero.

For $i=j$ we have

$$\frac{1}{2}\sum_{k\neq 0}\sum_{\alpha} \frac{1}{\alpha_k^2}\sum_{i} [(\epsilon_k^\alpha )_i ]^2  [\cos^2 (k.g(t)(\theta ))+\sin^2 (k.g(t)(\theta) )]\partial_{ii}^2f (g(t)(\theta )) dt$$
$$=\frac{1}{2}\sum_{k\neq 0}\sum_{\alpha} \frac{1}{\alpha_k^2}\sum_{i} [(\epsilon_k^\alpha )_i ]^2 \partial_{ii}^2f (g(t)(\theta )) dt. $$
Define
$$a_i =\frac{1}{2}\sum_{k\neq 0}\sum_{\alpha}\frac{1}{\alpha_k^2}[(\epsilon_k^\alpha )_i ]^2$$

We now consider the terms involving first derivatives of $f$. They are equal to

$$\frac{1}{2}\sum_{k\neq 0,\alpha} \frac{1}{\alpha_k^2}\sum_{i\neq j} ((\epsilon_k^\alpha )_i )((\epsilon_k^\alpha )_j )[-k_i cos (k.g(t)(\theta) )\sin (k.g(t)(\theta) ) $$
$$+ k_i \sin (k.g(t)(\theta) )\cos (k.g(t)(\theta) ) ]\partial_j f (g(t)(\theta) )dt=0.$$
Therefore these terms  vanish.

We conclude that the infinitesimal generator is given by
$\mathcal{L}(F (g))(\theta )=\sum\limits_{i=1}^d c_i \partial^2_{i i} f (g(\theta ))$, $f\in C^2(\TT^d )$,
with $c_i =a_i +\frac{1}{2}$.

When $d=2$ we have only one $\alpha$,   $(\epsilon_k )_1 =-\frac{k_2}{|k|^2}$ and  
$(\epsilon_k )_2 =\frac{k_1}{|k|^2}$ (for $k=(k_1 ,k_2 )$), from which the result follows, in this case
with $c= \frac{1}{2}(\sum\limits_{k\neq 0}  \frac{ k_1^2 }{\alpha_k^2}+1)$.
\end{proof}
\vskip 3mm

\bf Remark 2.5. \rm
\vskip 1mm
1) We can consider only $x(t)=y(t)$. In this case the stochastic process is the standard Brownian motion and the generator of the process the Laplacian.
\vskip 1mm
2)
We can obtain the same result (up to a modification of the constants in front of the second order differential operator) by considering a Brownian motion defined by a finite sum, namely
$$x(t)(\theta)=\sum_{|k|\leq N, k\neq 0}\sum_{\alpha } \frac{1}{\alpha_k}(\epsilon_k^\alpha )[x_{k}^{\alpha,1}(t)\cos(k.\theta)+x_{k}^{\alpha,2}(t)\sin(k.\theta)]+y(t)
$$
Then the  corresponding Lagrangian flows are diffeomorphisms by well known results (c.f. \cite{Kunita:90}).

\rm
\vskip 5mm
Let us consider a time-dependent vector field  $u: [0,T]\rightarrow H^1 (\TT^d )$, $u\in L^2 ([0,T];H^1 )$, with $\div u (t,\cdot )=0$ for all $t$. We can associate to $u(t)$ the following $\mathcal{F}_t$ stochastic differential equation:

$$
dg^u_i (t) (\theta )=u_i (t, g^u (t))dt +\sum_{k\neq 0, \alpha} 
\frac{1}{\alpha_k}(\epsilon_k^\alpha )_i [\cos(k.g^u (t)(\theta ))\sqrt{\frac{\nu}{c_i}}\circ  dx_{k}^{\alpha,1}(t)$$
$$+\sin(k.g^u (t)(\theta ))\sqrt{\frac{\nu}{c_i}}\circ dx_{k}^{\alpha,2}(t)] +dy(t) ,\leqno{(2.7)}
$$
with $g^u_i (0)(\theta )=\theta _i$, where $c_i$ denote the constants defined in Theorem 2.4., $t\in [0,T]$. When $c_i =c ~~~~ \forall i$ it reads,

$$dg^u(t)=(u(t)dt+\sqrt{\frac{\nu}{c}}\circ dx(t))(g^u(t)),~~~~g^u(0)=e.$$

This equation can be also written with the Stratonovich differentials replaced by It\^o ones, as the It\^o contraction vanishes (lemma 2.1).

\vskip 3mm
The rest of this section is devoted to show that  this equation defines a stochastic flow.

\vskip 3mm
\bf\noindent Theorem 2.6. \it
Let $u$ belong to the space $L^2 ([0,T];H^1 (\T ))$ with $\div u (t,\cdot )=0$ for all $t$. Then there exists a stochastic flow $g^u$
which is a solution of the stochastic differential equation  (2.7).
\rm
\begin{proof}

This result can be found in \cite{Fang-Li-Luo:11}. There the authors assume that the time dependent vector field takes values in $L^2 ([0,T];H^q (\T ))$
for $q>2$ but their proof still holds for $q=2$. They actually prove the existence of a strong solution (c.f. also \cite{Cipriano-Cruzeiro:07} for a weak solution). Concerning the flow property, the inverse of $g_t^u$ is well defined and satisfies the sde

$$
dX_i (t) (\theta )=-u_i (t, X (t))dt +\sum_{k\neq 0, \alpha} 
\frac{1}{\alpha_k}(\epsilon_k^\alpha )_i [\cos(k.X(t)(\theta ))\sqrt{\frac{\nu}{c_i}}\circ  d\hat x_{k}^{\alpha,1}(t)$$
$$+\sin(k.X (t)(\theta ))\sqrt{\frac{\nu}{c_i}}\circ d\hat x_{k}^{\alpha,2}(t)]$$
where $\hat x_{k}^{\alpha,i}(t)= x_{k}^{\alpha,i}(T)-x_{k}^{\alpha,i}(T-t)$ are time-reversed Brownian motions.

\end{proof}
\vskip 1mm
\bf\noindent Corollary 2.7.
\it Suppose that $u$ satisfies the hypothesis  of  Theorem 2.6. and let $g^u(t)$ be the solution of Eq.(2.7). Then the infinitesimal generator of this process, when computed at functionals of the form $F(g)(\theta)=f(g(\theta))$, is given by
$${\mathcal{L}}^u (F (g))(\theta )=u(t,g(\theta )).\nabla  f (g(\theta ))+\sum_{i=1}^d c_i \partial_{i i}^2 f (g(\theta )), ~~~~~\forall f\in C^2(\T ).$$

\rm
\vskip 7mm

\section{Stochastic variational principle for the Camassa-Holm equation}\label{Section3}
\setcounter{equation}0

Let $\mathcal S$ denote the set of continuous semimartingales taking values in the measure-preserving  homeomorphism group of $\TT^d$.

 We consider $D_t$ the generalized time derivative of semimartingales. If $F$ is a smooth function on $\TT^d$
 and $\xi \in \mathcal S$,
 
$$D_t F(\xi_t )=\lim_{\epsilon \rightarrow 0} \frac{1}{\epsilon}[E_t F(\xi (t+\epsilon ))-F(\xi (t))],$$ 
 where $E_t$ denotes conditional expectation with respect to $\mathcal F_t$. When $\xi =g^u \in \mathcal S_0$,
 we have
 $$D_t g^u_t =u(t,g^u_t ).$$

For functionals defined on $\mathcal S$ we consider the following variations: to a vector field $v\in C^1 ([0,T]\times C^\infty (\TT^d ))$ with $v(0,\cdot )
v(T,\cdot )=0$
and $\div v(t,\cdot )=0$ for all $t$,
we associate $e_t (v)$  the solution of the ordinary differential equation
 
 $$\frac{d}{dt} e_t^v (\theta )=\dot v (t, e_t^v (\theta )), ~~~~~~~ e_0^v (\theta )=\theta .$$
 The  admissible variations of $\xi \in \mathcal S$ will be the set of $e_t^v (\xi_t )$, 
(which are still semimartingales in $\mathcal S$).

 \vskip 5mm
 
\bf Definition. \rm  Let $J $ be a real-valued  functional defined on $\mathcal S$. Consider left and right derivatives of $J$ at 
a semimartingale $\xi$ along directions $e^v_{\cdot} $, $v\in C^1 ([0,T]\times C^\infty (\T ))$ with $v(0,\cdot )=v(T,\cdot )=0$, namely

$$(D_l )_{e^v_{\cdot}}J[\xi ]=\frac{d}{d\epsilon}_{|_{\epsilon =0}} J[e^{ \epsilon v}_{\cdot} \circ\xi (\cdot )]$$

$$(D_r )_{e^v_{\cdot}}J[\xi ]=\frac{d}{d\epsilon}_{|_{\epsilon =0}} J[\xi (\cdot )\circ e^{ \epsilon v}_{\cdot} ]$$
A semimartingale $\xi$ is said to be critical for $J$ if
$$(D_l)_{e^v_{\cdot}}J[\xi ]=(D_r )_{e^v_{\cdot}}J[\xi ]=0$$
for all $v$ as above.

We can now formulate our stochastic variational principle. 
If $g_t^u$ is a solution of (2.7)  for some $ u \in L^2 ([0,T]; H^1 )$,
$\div u (t,\cdot )=0$, then its drift is of the form
$D_tg_t^u = u(t, g_t^u )$. For such a process define the following stochastic action:

$$A[g^u]=\frac{1}{2}E\int_0^T ||(D_t g^u_t )\circ (g^u_t )^{-1} (\cdot )||^2_{H^1} dt $$
$$= \frac{1}{2}\int_0^T||u(t,\cdot )||^2_{H^1} dt
 \leqno{(3.2)}$$
 
We have the following 
\vskip 3mm
\bf Theorem 3.1. \it Let $g_t^u $ be solution of (2.7) for some $ u \in L^2 ([0,T]; H^1(\T ))$ with
$\div u (t,\cdot )=0$ for all $t\in [0,T]$.  Then
$g^u$ is critical for the action functional $A$ if and only if there exists $p\in L^2 ([0,T]; H^1 )$ such that the vector field $u(t)$ satisfies in the weak ($L^2$) sense the viscous Camassa-Holm
equation

\begin{eqnarray*}
\frac{\partial}{\partial t}(u- {\mathcal L} u)-\nu {\mathcal L} (u- {\mathcal L} u)=
-u \cdot \nabla (u- {\mathcal L}  u)+
\sum_j \nabla  u^j \cdot  {\mathcal L} u^j -\nabla p
\end{eqnarray*}
with $u(T, \cdot )=u_T (\cdot )$, $\div u(t,\cdot )=0 ~\forall t$.

\rm
\vskip 3mm

When ${\mathcal L}=\Delta$, it is the usual Camassa-Holm equation.
Since the operator ${\mathcal L}$ has similar properties to the standard Laplacian operator $\Delta$, the proof for ${\mathcal L} $ is analogous and we shall write
it for $\Delta$.

Recall also that ${\mathcal L}$ reduces to the  Laplacian when $d=2$ and that, by choosing $x(t)=y(t)$ we can always consider the Laplacian case for $d>2$.
\vskip 3mm
\it Proof of Theorem 3.1. \rm
As the metric is right-invariant we only need to consider left derivatives.
Let $\varepsilon >0$. Since $e_0^v (\theta)=\theta$,
we have 
\begin{eqnarray*}
e_t^{\varepsilon v}=e+\varepsilon\int_0^t  \dot{v}(s,e_s^{\varepsilon v} )ds
\end{eqnarray*}
and
\begin{eqnarray*}
\frac{d}{d\varepsilon}|_{\varepsilon=0}e_t^{\varepsilon v}=\int_0^t  \dot{v}(s,\theta)ds=v(t,\theta).
\end{eqnarray*}
We denote  $e_{\varepsilon}(t,\theta)=e_t^{\varepsilon v}(\theta)$ and $g_t^u =g_t$. We have,
\begin{eqnarray*}
\frac{d}{d\varepsilon}|_{\varepsilon=0}A[e_{\varepsilon}\circ g]&=&
\sum_{i}E\int\int<\frac{d}{d\varepsilon}|_{\varepsilon=0}\{D[e_\varepsilon^i(t,g_t)]\circ (e_\varepsilon(g_t))^{-1}\},u^i(t,\theta)>dtd\theta\\
&&+\sum_{i,j}E\int\int<\frac{d}{d\varepsilon}|_{\varepsilon=0}\partial_{j}\{D[e_\varepsilon^i(t,g_t)]\circ (e_\varepsilon(g_t))^{-1}\},\partial_{j}u^i(t,\theta)>dtd\theta\\
&\triangleq&A_1+A_2
\end{eqnarray*}
Since 
\begin{eqnarray*}
D[e_\varepsilon^i(t,g_t)]=\partial_t e_\varepsilon^i(t,g_t)+(u\cdot\nabla e_\varepsilon^i)(t,g_t)+\nu\Delta e_\varepsilon^i(t, g_t)
\end{eqnarray*}
and
\begin{eqnarray*}
g_t\circ (e_\varepsilon(g_t))^{-1}=g_t\circ g_t^{-1}\circ e_\varepsilon^{-1}=e_\varepsilon^{-1},
\end{eqnarray*}
we have
\begin{eqnarray*}
&&\frac{d}{d\varepsilon}|_{\varepsilon=0}\{D[e_\varepsilon^i(t,g_t)]\circ (e_\varepsilon(g_t))^{-1}\}\\
&=&\frac{d}{d\varepsilon}|_{\varepsilon=0}\{\partial_t e_\varepsilon^i(t,e_\varepsilon^{-1})+\sum_{l}u^l(t,e_\varepsilon^{-1})\partial_l e_\varepsilon^i(t,e_\varepsilon^{-1})+\nu\Delta e_\varepsilon^i(t, e_\varepsilon^{-1})\}\\
&=&\partial_t v^i(t,\theta)+\sum_l [u^l(t,\theta)\partial_lv^i(t,\theta)-\partial_l u^i(t,\theta)v^l(t,\theta)]+\nu\Delta v^i(t,\theta)
\end{eqnarray*}
and
\begin{eqnarray*}
&&\frac{d}{d\varepsilon}|_{\varepsilon=0}\partial_j\{D[e_\varepsilon^i(t,g_t)]\circ (e_\varepsilon(g_t))^{-1}\}\\
&=&\partial_j\frac{d}{d\varepsilon}|_{\varepsilon=0}\{D[e_\varepsilon^i(t,g_t)]\circ (e_\varepsilon(g_t))^{-1}\}\\
&=&\partial_j\{\partial_t v^i(t,\theta)+\sum_l [u^l(t,\theta)\partial_lv^i(t,\theta)-\partial_l u^i(t,\theta)v^l(t,\theta)]+\nu\Delta v^i(t,\theta)\}\\
&=&\partial_t\partial_j v^i+\sum_l[\partial_ju^l\partial_lv^i+u^l\partial^2_{jl}v^i-\partial^2_{jl}u^iv^l-\partial_l u^i\partial_jv^l]+\nu\Delta\partial_jv^i.
\end{eqnarray*}
Then
\begin{eqnarray*}
A_1 &=&
\sum_{i}E\int\int<\frac{d}{d\varepsilon}|_{\varepsilon=0}\{D[e_\varepsilon^i(t,g_t)]\circ (e_\varepsilon(g_t))^{-1}\},u^i(t,\theta)>dtd\theta\\
&=&\sum_{i}E\int\int<\partial_t v^i(t,\theta)+\sum_l [u^l(t,\theta)\partial_lv^i(t,\theta)-\partial_l u^i(t,\theta)v^l(t,\theta)]+\nu\Delta v^i(t,\theta)
,u^i(t,\theta)>dtd\theta\\
&=&\sum_i[-\int\int\partial_tu^iv^idtd\theta+\nu\int\int\Delta u^iv^idtd\theta -\int\int(\operatorname{div} u)u^iv^idtd\theta\\
&&-\int\int(u\cdot\nabla)u^iv^idtd\theta-\frac{1}{2}\int\int\partial_i |u|^2v^idtd\theta]\\
&=&\int\int<\partial_tu,v>dtd\theta+\int\int<\nu\Delta u,v>dtd\theta -\int\int<(\operatorname{div} u)u,v>dtd\theta\\
&&-\int\int<(u\cdot\nabla)u,v>dtd\theta-\int\int<\frac{1}{2}\nabla |u|^2,v>dtd\theta,
\end{eqnarray*}
\begin{eqnarray*}
A_2 &=& \sum_{i,j}E\int\int<\frac{d}{d\varepsilon}|_{\varepsilon=0}\partial_{j}\{D[e_\varepsilon^i(t,g_t)]\circ (e_\varepsilon(g_t))^{-1}\},\partial_{j}u^i>dtd\theta\\
&=&\sum_{i,j}E\int\int<\partial_t\partial_j v^i+\sum_l[\partial_ju^l\partial_lv^i+u^l\partial^2_{jl}v^i-\partial^2_{jl}u^iv^l-\partial_l u^i\partial_jv^l]+\nu\Delta\partial_jv^i,\partial_j u^i>dtd\theta\\
&=&\sum_{i,j}[\int\int\partial_t\Delta u^i v^idtd\theta-\nu\int\int\Delta^2u^i v^i dtd\theta +\int\int \operatorname{div} u ~\Delta u^i v^i dtd\theta\\
&&+\int\int(u\cdot\nabla)\Delta u^i v^idtd\theta+\int\int\partial_iu^j\Delta u^j v^i dtd\theta]\\
&=&\int\int<\partial_t\Delta u, v>dtd\theta-\int\int<\nu\Delta^2u, v> dtd\theta +\int\int <\operatorname{div} u ~\Delta u,v> dtd\theta\\
&&+\int\int<(u\cdot\nabla)\Delta u, v>dtd\theta+\sum_j\int\int<\nabla u^j\Delta u^j, v> dtd\theta,
\end{eqnarray*}

Therefore, using the condition $\operatorname{div} u=0$, 
\begin{eqnarray*}
\frac{d}{d\varepsilon}|_{\varepsilon=0}A[e_{\varepsilon}\circ g^u]=0
\end{eqnarray*}
is equivalent to the equation
\begin{eqnarray*}
&&\frac{\partial}{\partial t}(u-\Delta u)-\nu\Delta(u-\Delta u)\\
&=&-u\cdot \nabla (u-\Delta u)+\sum_j
\nabla u^j\cdot \Delta u^j-\nabla p,
\end{eqnarray*}
satisfied in the weak sense (since $v$ is arbitrary).
\vskip 7mm

\bf Remark 3.2. \rm 
\vskip 1mm

 The proof of Theorem 3.1 can be also found, in a more general  group-theoretical framework, in \cite{Arnaudon-Chen-Cruzeiro:12}, where it was written for the case of the two-dimensional torus.

\section{Existence of a critical diffusion}\label{Section4}
\setcounter{equation}0

In this paragraph we discuss the existence of a critical diffusion, whose drift, a posteriori, will be a $H^1$ solution of the Camassa-Holm equation.

Recall that $\mathcal S$ is  the set of continuous semimartingales taking values in the measure-preserving  homeomorphism group of the torus. ${\mathcal S}_0$ will denote the subset of $\mathcal S$ consisting of diffusions $g^u$ that verify equation Eq.(2.7) for some drift $u\in L^2 ([0,T]; H^1 (\TT^d  ))$ with $\div u(t,\cdot )=0$
for all $t$. 
Let us consider the set of semimartingales of the form $g(t)=\eta (t, g_t^u (\theta) )$, with $g^u \in {\mathcal S}_0$, $\eta$ smooth in both variables and a measure-preserving diffeomorphism in $\theta$ with $\eta_t^{-1}$ also smooth. Denote this set by ${\mathcal S}_1$. We have ${\mathcal S}_0 \subset {\mathcal S}_1 \subset {\mathcal S}$.

Notice that, if $g_t^u$ is a semimartingale in ${\mathcal S}_0$ then all variations $e_t^v (g_t^u )$, $v$ as above, belong to  ${\mathcal S}_1$
and have a drift in $ L^2 ([0,T]; H^1)$.

Fix a vector field $z \in L^2 ([0,T];L^2 )$ and a  constant $c>0$. The action  functional,  defined   on the set of semimartingales in ${\mathcal S}_1$ 
for which the corresponding drifts satisfy the condition
$$\int \int  <u(t,\theta ), z(t,\theta > dt d~\theta \geq c,$$
 is bounded below.  Let $\alpha$ be its infimum.
Suppose that this infimum belongs to ${\mathcal S}_0$.  We consider $g^m(t)$ a minimizing sequence. If the minimum is attained in 
${\mathcal S}_0$ we can assume that
$g^m (t)=g^{u_m}(t)$, with $u_m \in L^2 ([0,T];H^1 )$.

We have the convergence $A[g^m(\cdot)]\rightarrow \alpha$ as $n\rightarrow \infty$. The sequence 
$A[g^m(\cdot)]=\|u_m\|^2_{L^2([0,T];H^1)}$
is bounded, therefore there exists a subsequence $u_{m_j}$ of $u_m$ that converges with respect to the weak topology, more precisely 
there exists $u\in L^2([0,T];H^1)$ such that
$$u_{m_j}\rightarrow u, ~~~~\hbox{weakly}~ in~ L^2([0,T]; H^1).$$
The limit function $u$ satisfies the assumptions of Theorem 2.6. Then we can construct a stochastic process $g^u(t)$ in $\mathcal{S}_0$ 
as solution of the stochastic differential equation (2.7). Since the norm is weakly lower semi-continuous, we have
$$A[g^u(\cdot)]\leq \lim\limits_{j\rightarrow\infty} A[g^{m_j}(\cdot)],$$
we deduce that $A[g^u(\cdot)]=\alpha$ and $g^u(t)$ is a minimum.

The curve of vector fields $u(t,\cdot )$
satisfies the incompressible viscous Camassa-Holm equation and it satisfies the condition
$$\int \int <u(t,\theta ), z(t,\theta > dt d\theta \geq c,$$
which is preserved by weak limits.

We have therefore proved the following result,
\vskip 3mm
\bf Theorem 4.1. \it  Let $z \in L^2 ([0,T];L^2 )$ be a vector field and $c$ a positive constant.
There exists a semimartingale $g(t)$ in the class ${\mathcal S}_1$
which realizes the minimum of the action functional $A$.
If the minimum belongs to ${\mathcal S}_0$ then the corresponding drift
$u(t, \cdot )$ satisfies the incompressible viscous Camassa-Holm equation in the weak $L^2$ sense.
Moreover  $\int \int <u(t,\theta ), z(t,\theta > dt d\theta \geq c$.
\rm
\vskip 3mm

\section{The Leray-alpha  equations}\label{Section5}
\setcounter{equation}0

The assumption that  the minimum of the action belongs to the space ${\mathcal S}_0$ considered in  last paragraph, is, of course, a priori quite strong.
In this section we consider the Leray-alpha equations (with $\alpha =1$) and we work with different variations that preserve the class of semimartingales ${\mathcal S}_0$. This allows us to remove this assumption and obtain a stronger result for this model.

For simplicity we assume in this section that $c_i =c~~~~\forall i$.

The incompressible  Leray-alpha equation, with $\alpha =1$, is

\begin{eqnarray*}
\frac{\partial}{\partial t}(u-\Delta u)-\nu\Delta(u-\Delta u)=-u\cdot \nabla(u-\Delta u)-\nabla p
\end{eqnarray*} 
with $\div u(t,\cdot )=0$ for all $t\in [0,T]$.

We consider the   same action functional as before, namely

$$A(g^u)=\frac{1}{2}E\int_0^T \|(D_t g_t^u)\circ(g_t^u)^{-1}(\cdot)\|_{H^1}^2dt.$$
The admissible variations to be of the form
$g_t^{\varepsilon}$ satisfying the following equation 
$$
dg_t^{\varepsilon}=(\sqrt{\frac{\nu}{c}}\nabla e_t^{\epsilon v}dx(t))(g_t^{\varepsilon})+[\partial_t e_t^{\epsilon v}+(u\cdot\nabla)e_t^{\epsilon v}+\nu\Delta e_t^{\epsilon v}](g_t^{\varepsilon})dt, \leqno{(5.1)}
$$
In particular we work only in the class ${\mathcal S}_0$. Considering the notion of criticality with respect to this new class of admissible variations, the following result holds

\vskip 3mm
\bf Theorem 5.1. \it Let $g_t^u $ be solution of (2.7) for some $ u \in L^2 ([0,T]; H^1(\T ))$ with
$\div u (t,\cdot )=0$ for all $t\in [0,T]$.
 Then
$g^u$ is critical for the action functional $A$ and with respect to the admissible class of variations (5.1) if and only if there exists $p\in L^2 ([0,T]; H^1 )$ such that the vector field $u(t)$ satisfies in the weak ($L^2$) sense the equation

\begin{eqnarray*}
\frac{\partial}{\partial t}(u-\Delta u)-\nu\Delta(u-\Delta u)=-u\cdot \nabla(u-\Delta u)-\nabla p
\end{eqnarray*}
with $\div u(t,\cdot )=0 ~\forall t$.

\rm
\vskip 3mm

\begin{proof} The proof is similar to the one of Theorem 3.1.
We have
\begin{eqnarray*}
\frac{d}{d\varepsilon}|_{\varepsilon=0}A[g^{\varepsilon}]&=&
\sum_{i}E\int\int<\frac{d}{d\varepsilon}|_{\varepsilon=0}\{D(g_t^{\varepsilon})^i\circ (g_t^{\varepsilon})^{-1}\},u^i(t,\theta)>dtd\theta\\
&&+\sum_{i,j}E\int\int<\frac{d}{d\varepsilon}|_{\varepsilon=0}\partial_{j}\{D(g_t^{\varepsilon})^i\circ (g_t^{\varepsilon})^{-1}\},\partial_{j}u^i(t,\theta)>dtd\theta\\
&\triangleq&A_1+A_2.
\end{eqnarray*}
Since 
\begin{eqnarray*}
D(g_t^{\varepsilon})^i=[\partial_t (e_t ^{\varepsilon v})^i+(u\cdot\nabla) (e_t^{\varepsilon v})^i+\nu\Delta (e_t^{\varepsilon v})^i](g_t^{\varepsilon})
\end{eqnarray*}
and
\begin{eqnarray*}
D(g_t^{\varepsilon})^i\circ (g_t^{\varepsilon})^{-1}=\partial_t (e_t ^{\varepsilon v})^i+(u\cdot\nabla) (e_t^{\varepsilon v})^i+\nu\Delta (e_t^{\varepsilon v})^i,
\end{eqnarray*}
we have
\begin{eqnarray*}
&&\frac{d}{d\varepsilon}|_{\varepsilon=0}\{D(g_t^{\varepsilon})^i\circ (g_t^{\varepsilon})^{-1}\}\\
&=&\frac{d}{d\varepsilon}|_{\varepsilon=0}\{\partial_t (e_t ^{\varepsilon v})^i+(u\cdot\nabla) (e_t^{\varepsilon v})^i+\nu\Delta (e_t^{\varepsilon v})^i\}\\
&=&\partial_t v^i(t,\theta)+\sum_l u^l(t,\theta)\partial_lv^i(t,\theta)+\nu\Delta v^i(t,\theta)
\end{eqnarray*}
and
\begin{eqnarray*}
&&\frac{d}{d\varepsilon}|_{\varepsilon=0}\partial_j\{D(g_t^{\varepsilon})^i\circ (g_t^{\varepsilon})^{-1}\}\\
&=&\partial_j\frac{d}{d\varepsilon}|_{\varepsilon=0}\{D(g_t^{\varepsilon})^i\circ (g_t^{\varepsilon})^{-1}\}\\
&=&\partial_j\{\partial_t v^i(t,\theta)+\sum_l u^l(t,\theta)\partial_lv^i(t,\theta)+\nu\Delta v^i(t,\theta)\}\\
&=&\partial_t\partial_j v^i+\partial_j[\sum_l u^l(t,\theta)\partial_lv^i(t,\theta)]+\nu\Delta\partial_jv^i.
\end{eqnarray*}
Then
\begin{eqnarray*}
A_1&=&
\sum_{i}E\int\int<\frac{d}{d\varepsilon}|_{\varepsilon=0}\{D(g_t^{\varepsilon})^i\circ (g_t^{\varepsilon})^{-1}\},u^i(t,\theta)>dtd\theta\\
&=&\sum_{i}E\int\int<\partial_t v^i(t,\theta)+\sum_l u^l(t,\theta)\partial_lv^i(t,\theta)+\nu\Delta v^i(t,\theta)
,u^i(t,\theta)>dtd\theta\\
&=&\sum_i[-\int\int\partial_tu^iv^idtd\theta+\nu\int\int\Delta u^iv^idtd\theta
-\int\int(u\cdot\nabla)u^iv^idtd\theta ]\\
&=&-\int\int<\partial_tu,v>dtd\theta+\int\int<\nu\Delta u,v>dtd\theta
-\int\int<(u\cdot\nabla)u,v>dtd\theta,
\end{eqnarray*}
\begin{eqnarray*}
A_2&=& \sum_{i,j}E\int\int<\frac{d}{d\varepsilon}|_{\varepsilon=0}\partial_{j}\{D(g_t^{\varepsilon})^i\circ (g_t^{\varepsilon})^{-1}\},\partial_{j}u^i>dtd\theta\\
&=&\sum_{i,j}E\int\int<\partial_t\partial_j v^i+\partial_j[\sum_l u^l(t,\theta)\partial_lv^i(t,\theta)]+\nu\Delta\partial_jv^i,\partial_j u^i>dtd\theta\\
&=&\sum_{i,j}[\int\int\partial_t\Delta u^i v^idtd\theta-\nu\int\int\Delta^2u^i v^i dtd\theta
-\int\int \sum_l u^l(t,\theta)\partial_lv^i(t,\theta) \Delta u^i(t,\theta)  dtd\theta]\\
&=&\sum_{i,j}[\int\int\partial_t\Delta u^i v^idtd\theta-\nu\int\int\Delta^2u^i v^i dtd\theta
+\int\int \sum_l v^i(t,\theta) u^l(t,\theta)\partial_l\Delta u^i(t,\theta)  dtd\theta]\\
&=&\int\int<\partial_t\Delta u, v>dtd\theta-\int\int<\nu\Delta^2u, v> dtd\theta
+\int\int<(u\cdot\nabla)\Delta u, v>dtd\theta,
\end{eqnarray*}

So $\frac{d}{d\varepsilon}|_{\varepsilon=0}A[g^{\varepsilon}]=0$ is equivalent to
\begin{eqnarray*}
\partial_t(u-\Delta u)-\nu\Delta(u-\Delta u)=-u\cdot \nabla(u-\Delta u)-\nabla p
\end{eqnarray*} 
with $\div u(t,\cdot )=0$ for all $t\in [0,T]$. The equation holds in the weak sense.
\end{proof}

As the admissible variations preserve the class ${\mathcal S}_0$, one can show by the same methods used in Theorem 4.1,
the following
\vskip 3mm
\bf Theorem 5.2. \it 
 Let $z \in L^2 ([0,T];L^2 )$ be a vector field and $c$ a positive constant.
There exists a semimartingale $g(t)$ in the class ${\mathcal S}_0$
which realizes the minimum of the action functional $A$.
Then the corresponding drift
$u(t, \cdot )$ satisfies the incompressible Leray-alpha equation equation in the weak $L^2$ sense. Moreover
$\int \int <u(t,\theta ), z(t,\theta > dt d\theta \geq c$.
\rm
\vskip 5mm

\bf Acknowledgements: \rm  ~ 
The first author thanks Prof. D.D. Holm for guiding her into the literature on the PDE's  in turbulence theory.

The second author  is supported by State Scholarship Fund of China and acknowledges the Centre Interfacultaire Bernoulli, EPFL, Switzerland, for the invitation to visit the center during two weeks. Both authors have been partly supported by the project PTDC/MAT-CAL/0749/2012, FCT, Portugal.

%
%

\providecommand{\bysame}{\leavevmode\hbox to3em{\hrulefill}\thinspace}

\end{document}